\newcommand{\Z}{{\mathbb Z}}            
\newcommand{\C}{{\mathbb C}}            
\newcommand{\into}{\hookrightarrow}     
\DeclareMathOperator{\Res}{{\rm Res}}
\DeclareMathOperator{\Rec}{{\rm Rec}}
\newcommand{\F}{{\mathbf F}}                    
\newtheorem{theorem}{Theorem}[section]
\newtheorem{lemma}[theorem]{Lemma}
\newtheorem{prop}[theorem]{Proposition}
\theoremstyle{remark}
\newtheorem{remark}[theorem]{Remark}
\newcounter{listcounter1}
\begin{document}

\title{Reciprocity via Reciprocants}
\date{\today}

\author[Matthew Baker]{Matthew H. Baker}

\begin{abstract}
The determinant of a skew-symmetric matrix has a canonical square root given by the Pfaffian. Similarly, the resultant of two reciprocal polynomials of even degree has a canonical square root given by their {\em reciprocant}.
Computing the reciprocant of two cyclotomic polynomials yields a short and elegant proof of the Law of Quadratic Reciprocity.
\end{abstract}

\thanks{We thank Antoine Chambert-Loir for pointing us to M{\'e}rindol's paper \cite{Merindol}. Thanks also to Darij Grinberg, Franz Lemmermeyer, and Evan O'Dorney for helpful feedback on an earlier version of this paper.
The author was supported by NSF grant DMS-2154224 and a Simons Fellowship in Mathematics.}

\maketitle

\section{Introduction}

Let $p$ be a prime number and let $a$ be an integer not divisible by $p$.
The {\em Legendre symbol} $\left( \frac{a}{p} \right)$ is defined by $\left( \frac{a}{p} \right) = 1$ if $a$ is a square modulo $p$ and $\left( \frac{a}{p} \right) = -1$ otherwise.


According to {\em Euler's criterion}, $a^{(p-1)/2} \equiv 1 \pmod{p}$ if $\left( \frac{a}{p} \right) = 1$ and $a^{(p-1)/2} \equiv -1 \pmod{p}$ if $\left( \frac{a}{p} \right) = -1$.

The Law of Quadratic Reciprocity, first proved by Gauss, asserts that there is an unexpected relationship between $\left( \frac{p}{q} \right)$ and $\left( \frac{q}{p} \right)$ when $p,q$ are distinct odd primes, and a supplement to the law asserts that $\left( \frac{2}{p} \right)$ depends only on  $p$ modulo 8.

\begin{theorem}[Law of Quadratic Reciprocity] \label{thm:QR}
\mbox{}
\begin{enumerate}
\item[(a)] If $p$ and $q$ are distinct odd primes then $\left( \frac{p}{q} \right) \left( \frac{q}{p} \right) = (-1)^{\frac{p-1}{2}\frac{q-1}{2}}$.
\item[(b)] If $p$ is an odd prime then $\left( \frac{2}{p} \right) = (-1)^{\frac{p^2 - 1}{8}}$.
\end{enumerate}
\end{theorem}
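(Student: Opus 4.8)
The plan is to compute the reciprocant of two cyclotomic polynomials and read the Legendre symbol off the answer. For $n \ge 3$, the cyclotomic polynomial $\Phi_n \in \Z[x]$ is monic, reciprocal, and of even degree $\varphi(n)$, so there is a unique monic $\Psi_n \in \Z[y]$ of degree $\varphi(n)/2$ with $\Phi_n(x) = x^{\varphi(n)/2}\,\Psi_n(x + x^{-1})$, and the reciprocant of $\Phi_m$ and $\Phi_n$ is $\Res(\Psi_m, \Psi_n)$, a canonical square root of $\Res(\Phi_m, \Phi_n)$. Two elementary facts will be used. First, evaluating the defining relation at $x = 1$ gives $\Psi_n(2) = \Phi_n(1)$, which is $q$ when $n = q$ is an odd prime and $2$ when $n = 8$ (here $\Phi_8 = x^4 + 1$ and $\Psi_8 = y^2 - 2$). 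Second, for $p \nmid n$ the map $\zeta \mapsto \zeta^p$ permutes the primitive $n$-th roots of unity, so $\Res(\Phi_p, \Phi_n) = 1$; applied with $n = q$ and $n = 8$, this shows the reciprocant $\Res(\Psi_p, \Psi_n)$ is then a square root of $1$, hence lies in $\{1,-1\}$.

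The heart of the argument is to determine this sign by reduction modulo $p$. From $\Phi_p(x) = (x^p - 1)/(x - 1) \equiv (x - 1)^{p-1} \pmod p$, dividing by $x^{(p-1)/2}$ and using $x^{-1}(x-1)^2 = (x + x^{-1}) - 2$ yields $\Psi_p(y) \equiv (y - 2)^{(p-1)/2} \pmod p$. Since the resultant of monic polynomials commutes with reduction mod $p$, is multiplicative in each argument, and satisfies $\Res(y - c, g) = g(c)$, for every monic $g \in \Z[y]$ we get
\[
\Res(\Psi_p, g) \;\equiv\; \Res\!\big((y - 2)^{(p-1)/2},\, g\big) \;=\; g(2)^{(p-1)/2} \pmod p .
\]
With $g = \Psi_q$ (an odd prime $q \ne p$) this gives $\Res(\Psi_p, \Psi_q) \equiv q^{(p-1)/2} \equiv \left(\tfrac qp\right) \pmod p$ by Euler's criterion; as both sides lie in $\{1,-1\}$ and $p$ is odd, $\Res(\Psi_p, \Psi_q) = \left(\tfrac qp\right)$. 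With $g = \Psi_8$ it gives, in the same way, $\Res(\Psi_p, \Psi_8) \equiv 2^{(p-1)/2} \equiv \left(\tfrac 2p\right) \pmod p$, hence $\Res(\Psi_p, \Psi_8) = \left(\tfrac 2p\right)$.

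Part (a) is now immediate from the antisymmetry of the resultant: swapping $p$ and $q$,
\[
\left(\tfrac qp\right) = \Res(\Psi_p, \Psi_q) = (-1)^{\frac{p-1}{2}\cdot\frac{q-1}{2}}\,\Res(\Psi_q, \Psi_p) = (-1)^{\frac{p-1}{2}\cdot\frac{q-1}{2}}\left(\tfrac pq\right),
\]
and multiplying by $\left(\tfrac pq\right)$ gives $\left(\tfrac pq\right)\left(\tfrac qp\right) = (-1)^{\frac{p-1}{2}\cdot\frac{q-1}{2}}$. For part (b) there is no symmetry to exploit, so I would finish by evaluating $\Res(\Psi_p, \Psi_8)$ directly: writing $\Psi_p(y) = \prod_{a=1}^{(p-1)/2}\big(y - 2\cos\tfrac{2\pi a}{p}\big)$, one has $\Res(\Psi_p, \Psi_8) = \prod_a \Psi_8\!\big(2\cos\tfrac{2\pi a}{p}\big) = \prod_a 2\cos\tfrac{4\pi a}{p} = \prod_{c=1}^{(p-1)/2} 2\cos\tfrac{2\pi c}{p}$, since $a \mapsto 2a$ permutes $\{1,\dots,(p-1)/2\}$ modulo sign. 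The sign of this product is $(-1)^{\#\{c \,:\, p/4 < c \le (p-1)/2\}}$, and that exponent has the same parity as $\tfrac{p^2-1}{8}$ (the classical count behind the second supplement); combined with $\Res(\Psi_p, \Psi_8) = \left(\tfrac 2p\right)$, this gives $\left(\tfrac 2p\right) = (-1)^{\frac{p^2-1}{8}}$.

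The one point demanding genuine care is the square-root property $\Res(\Psi_m, \Psi_n)^2 = \Res(\Phi_m, \Phi_n)$ invoked above — that the reciprocant is a true square root of the resultant with no stray sign — which rests on the fact that the roots of $\Phi_m$ and $\Phi_n$ fall into quadruples $\{\alpha, \alpha^{-1}, \beta, \beta^{-1}\}$, each contributing the perfect square $\big((\alpha + \alpha^{-1}) - (\beta + \beta^{-1})\big)^2$ to the resultant; I take this, along with the antisymmetry and the description of the reciprocant as $\Res(\Psi_m,\Psi_n)$, as established earlier. Everything else is elementary, the only slightly fiddly computation being the parity count for part (b). The conceptual point — and the reason the reciprocant, not the resultant, is what is needed — is that passing from $\Phi$ to $\Psi$ halves the degree, turning the uninformative Fermat-type identity $\Res(\Phi_p, \Phi_q) \equiv q^{\,p-1} \equiv 1 \pmod p$ into the Euler-criterion identity $\Res(\Psi_p, \Psi_q) \equiv q^{(p-1)/2} \equiv \left(\tfrac qp\right) \pmod p$.
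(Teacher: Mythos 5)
Your proposal is correct, and part (a) is essentially the paper's proof: reduce the trace polynomial of $\Phi_p$ modulo $p$ to $(y-2)^{(p-1)/2}$, identify the reciprocant with $\left(\frac{q}{p}\right)$ via (RES1) and Euler's criterion, and finish with the antisymmetry (RES2). The two genuine divergences are worth noting. First, you prove $\Res(\Phi_p,\Phi_n)=1$ by observing that $\zeta\mapsto\zeta^p$ permutes the primitive $n$th roots of unity; the paper instead runs the Euclidean algorithm on $g_n=(x^n-1)/(x-1)$ using (RES4), which is longer but stays entirely inside $\Z[x]$ (no complex roots) and yields the more general fact $\Res(g_m,g_n)=1$ for all coprime $m,n$. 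Second, for part (b) your route is genuinely different: the paper pairs $g_p$ with $\Phi_4=x^2+1$ (whose trace polynomial is $x$), obtains $\Rec(\Phi_4,g_p)=\left(\frac{-2}{p}\right)$, evaluates the other side $g_p^\#(0)$ purely algebraically via the recursion $g_n^\#=xg_{n-2}^\#-g_{n-4}^\#$, and then converts to $\left(\frac{2}{p}\right)$ using $\left(\frac{-1}{p}\right)=(-1)^{(p-1)/2}$; you pair with $\Phi_8=x^4+1$ (trace polynomial $y^2-2$), which hits $\left(\frac{2}{p}\right)$ directly since $\Psi_8(2)=2$, and you determine the sign of $\Res(\Psi_p,\Psi_8)=\prod_{c}2\cos(2\pi c/p)$ by the classical count of $c$ with $p/4<c\le(p-1)/2$. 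I verified your key identities --- $\Psi_8(2\cos\theta)=2\cos 2\theta$, the fact that doubling permutes $\{1,\dots,(p-1)/2\}$ up to sign, and the agreement of the parity of the count with $\frac{p^2-1}{8}$ in all four residue classes mod $8$ --- so the argument closes. Your version buys a direct formula for $\left(\frac{2}{p}\right)$ and makes contact with the Gauss-lemma-style lattice count, at the cost of an analytic (trigonometric) evaluation; the paper's version stays integer-valued and recursive throughout.
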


There are currently more than 300 known proofs of the Law of Quadratic Reciprocity \cite{QRdatabase}. In this paper we will present an elegant proof that deserves to be better known. 
The basic approach, via the identity 
\begin{equation} \label{eq:introRES3}
{\rm Res}(g,f) = (-1)^{\deg(f) \cdot \deg(g)} {\rm Res}(f,g)
\end{equation}
for resultants, appears to have been independently discovered on at least two occasions \cite{Merindol,Hambleton-Scharaschkin}, see Section~\ref{sec:RelatedWork} below for a discussion of related work.

Our exposition is somewhat novel, in that a central role is played by an expression that we dub the {\em reciprocant}.\footnote{This is not a standard term; we chose the name both because it involves reciprocal polynomials and because of its relation to quadratic reciprocity.}
The resultant of two reciprocal\footnote{Reciprocal means that the coefficients read the same backwards and forwards; see Section~\ref{sec:recip} for more details.} polynomials $f$ and $g$ of even degree is always a square, and the reciprocant of $f$ and $g$ furnishes a canonical square root.
If $p$ and $q$ are distinct primes, the resultant of the cyclotomic polynomials $\Phi_p(x)$ and $\Phi_q(x)$ is always equal to 1, but their reciprocant $\Rec(\Phi_p(x),\Phi_q(x))$ turns out to be the Legendre symbol $\left( \frac{q}{p} \right)$.
By symmetry, we have $\Rec(\Phi_q(x),\Phi_p(x)) = \left( \frac{p}{q} \right)$, and part (a) of the Law of Quadratic Reciprocity is then a consequence of \eqref{eq:introRES3}. 

We also provide a proof via reciprocants of the supplementary law for $\left( \frac{2}{p} \right)$.

Our proof of the resultant identity $\Res(\Phi_p(x),\Phi_q(x))=1$ is original, to the best of our knowledge. It is in some ways more elementary than the other proofs we have seen of this formula. 

Throughout the article, we strive to keep the exposition as elementary as possible, with the goal of making the paper understandable by a reader who has taken basic undergraduate courses in number theory, abstract algebra, and linear algebra.
In order to make the paper as self-contained as possible, we provide two appendices, one on resultants and one on the trace polynomial (which is used to define the reciprocant).


\section{Resultants and Reciprocants}

\subsection{Resultants}

The resultant of two monic\footnote{We restrict ourselves to resultants of {\em monic} polynomials over an integral domain here, as (a) it's the only case we need and (b) the identities (RES1)-(RES4) look cleaner in the monic case.} polynomials $f,g \in R[x]$ over an integral domain $R$ satisfies numerous useful identities, including the following (see Appendix A for details):

\begin{itemize}
\item[(RES1)] If $f(x) = (x - \alpha_1)\cdots (x - \alpha_m)$ with all $\alpha_i$ in $R$, then ${\rm Res}(f,g) = \prod_{i} g(\alpha_i)$. 
\item[(RES2)] ${\rm Res}(g,f) = (-1)^{\deg(f) \cdot \deg(g)} {\rm Res}(f,g)$. 
\item[(RES3)] Suppose $\phi : R \to R'$ is a ring homomorphism. Then\footnote{Here $\phi(f) \in R'[x]$ denotes the image of $f \in R[x]$ under the homomorphism $R[x] \to R'[x]$ induced by $\phi$, and similarly for $\phi(g)$.} 
\[
\phi(\Res(f,g)) = \Res(\phi(f),\phi(g)).
\]
\item[(RES4)] If $g(x) = f(x) \cdot q(x) + r(x)$ with $f,g,r \in R[x]$ monic and $q \in R[x]$ arbitrary, then 
\[
{\rm Res}(f,g) = {\rm Res}(f,r).
\]
\end{itemize}


\subsection{Reciprocal polynomials and their traces} \label{sec:recip}

A polynomial $g(x) = a_0 + a_1 x + \cdots + a_{n} x^{n} \in R[x]$ with coefficients in a ring\footnote{All rings in this paper will be nonzero commutative rings with identity.} $R$ is called {\em reciprocal} if $a_n \neq 0$ and $a_k = a_{n-k}$ for all $k=0,1,\ldots,n$. Equivalently, $g$ is reciprocal if and only if $g(x) = x^{n}g(\frac{1}{x})$.

If $g \in R[x]$ is reciprocal of even degree $2m$, there is a unique polynomial $g^\#(x) \in R[x]$ of degree $m$ such that 
\begin{equation} \label{eq:Chebyshev transform}
g(x) = x^m g^\#(x + \frac{1}{x}).
\end{equation}
We call $g^\#(x)$ the {\em trace polynomial} of $g$ (see Appendix B for details). 
Note that if $g(x)$ is monic, then $g^\#(x)$ is monic as well.

The following lemma will be proved in Appendix B:

\begin{lemma} \label{lem:computing the reciprocant}
If $g(x) = \prod_{i=1}^m (x - \alpha_i)(x - \alpha_i^{-1})$ for some units $\alpha_1,\ldots,\alpha_m \in R^\times$, then $g$ is reciprocal and 
\begin{equation} \label{eq:product formula for gsharp}
g^\#(x) = \prod_{i=1}^m \left(x - (\alpha_i + \alpha_i^{-1}) \right).
\end{equation}
\end{lemma}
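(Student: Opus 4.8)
The plan is to verify the two assertions of Lemma~\ref{lem:computing the reciprocant} directly from the definitions. The reciprocity of $g$ is the easy part: since $g(x) = \prod_{i=1}^m (x-\alpha_i)(x-\alpha_i^{-1})$, the $2m$ roots of $g$ (counted with multiplicity, over an extension where they live) come in pairs $\{\alpha_i, \alpha_i^{-1}\}$ closed under inversion, and $g$ is monic of degree $2m$. To confirm $g(x) = x^{2m} g(1/x)$, I would substitute and compute
\[
x^{2m} g(1/x) = \prod_{i=1}^m x^2 \left(\tfrac{1}{x}-\alpha_i\right)\left(\tfrac{1}{x}-\alpha_i^{-1}\right) = \prod_{i=1}^m (1 - \alpha_i x)(1 - \alpha_i^{-1} x),
\]
and then observe that $(1-\alpha_i x)(1-\alpha_i^{-1}x) = \alpha_i^{-1}\cdot\alpha_i\,(x-\alpha_i)(x-\alpha_i^{-1})$ after factoring $-\alpha_i$ out of the first factor and $-\alpha_i^{-1}$ out of the second; the product of these scalars over $i$ is $1$ since $\alpha_i \cdot \alpha_i^{-1}=1$ (here it is important that each $\alpha_i$ is a \emph{unit}, so that these factorizations make sense in $R$). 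Hence $x^{2m}g(1/x) = g(x)$, so $g$ is reciprocal.

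For the trace polynomial, the strategy is to exhibit the claimed polynomial
\[
h(x) := \prod_{i=1}^m \left(x - (\alpha_i + \alpha_i^{-1})\right)
\]
and check that it satisfies the defining equation \eqref{eq:Chebyshev transform}, namely $g(x) = x^m h(x + 1/x)$; uniqueness of $g^\#$ (asserted in the text, with details deferred to Appendix~B) then forces $g^\# = h$. The computation is:
\[
x^m h\!\left(x + \tfrac{1}{x}\right) = x^m \prod_{i=1}^m \left(x + \tfrac{1}{x} - \alpha_i - \alpha_i^{-1}\right) = \prod_{i=1}^m x\left(x + \tfrac{1}{x} - \alpha_i - \alpha_i^{-1}\right) = \prod_{i=1}^m \left(x^2 - (\alpha_i + \alpha_i^{-1})x + 1\right).
\]
The key algebraic identity is then $x^2 - (\alpha_i + \alpha_i^{-1})x + 1 = (x - \alpha_i)(x - \alpha_i^{-1})$, which one verifies by expanding the right side and using $\alpha_i \cdot \alpha_i^{-1} = 1$. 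Multiplying over $i$ gives exactly $g(x)$, completing the verification.

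There is one subtlety to address carefully: the polynomial $h$ a priori lives in $R[x+\tfrac1x]$ only formally, and one wants $x^m h(x+1/x)$ to be an honest element of $R[x]$ of degree $2m$ — this is clear from the product form above, since each factor $x^2 - (\alpha_i+\alpha_i^{-1})x + 1$ is a genuine monic quadratic in $R[x]$. I expect the main (very mild) obstacle to be purely bookkeeping: making sure the substitution $x \mapsto x + 1/x$ is carried out rigorously without dividing by $x$ in the ring $R[x]$, which is why it is cleanest to do the manipulation at the level of the individual degree-two factors and only then take the product. Once that is handled, both claims fall out immediately, and the monicity of $g^\#$ noted in the text is visible from the product formula \eqref{eq:product formula for gsharp}. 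I would also remark that the hypothesis that the $\alpha_i$ are units is used in both halves of the argument, which is why it appears in the statement.
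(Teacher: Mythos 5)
Your proof is correct and follows essentially the same route as the paper: the same direct computation of $x^{2m}g(1/x)$ for reciprocity, and the same factor-by-factor verification that $x\left(x+\tfrac{1}{x}-(\alpha_i+\alpha_i^{-1})\right)=(x-\alpha_i)(x-\alpha_i^{-1})$, combined with the uniqueness of the trace polynomial, for \eqref{eq:product formula for gsharp}. The only cosmetic difference is that the paper states the quadratic computation as the case $m=1$ and then multiplies, whereas you verify the defining equation \eqref{eq:Chebyshev transform} for the full product at once; these are the same argument.
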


\begin{remark} \label{rem:split}
Conversely, it follows from \eqref{eq:Chebyshev transform} that if $K$ is a field, $g \in K[x]$ is reciprocal of even degree $2m$, and $L$ is a splitting field for $g$ over $K$, there exist $\alpha_1,\ldots,\alpha_m \in L^\times$ such that
$g(x) = \prod_{i=1}^m (x - \alpha_i)(x - \alpha_i^{-1})$.
\end{remark}

\subsection{Reciprocants}

Over an integral domain, the reciprocant is a canonical square root of the resultant of two reciprocal polynomials.
More precisely:

\begin{prop} \label{prop:res square}
If $R$ is an integral domain and $f,g \in R[x]$ are monic reciprocal polynomials of even degree, then
\[
\Res(f,g) = \Rec(f,g)^2,
\]
where $\Rec(f,g) := \Res(f^\#,g^\#) \in R$ is the {\em reciprocant} of $f$ and $g$.
\end{prop}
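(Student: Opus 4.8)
The plan is to reduce to the case in which $f$ splits into linear factors and then read off the result directly from the defining relation \eqref{eq:Chebyshev transform} of the trace polynomial. Since $R$ is an integral domain it embeds into its fraction field $K$, and $K$ embeds into a splitting field $L$ for $f$ over $K$. Applying (RES3) to the inclusion $R \hookrightarrow L$ shows that the element $\Res(f,g) \in R$ maps to the resultant computed in $L[x]$, and likewise $\Rec(f,g) = \Res(f^\#,g^\#)$ maps to the reciprocant computed in $L[x]$; here one uses that $f^\#$ and $g^\#$, being the unique polynomials satisfying \eqref{eq:Chebyshev transform}, are compatible with base change, so there is no ambiguity about what $f^\#$ and $g^\#$ mean over $L$. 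As $R \hookrightarrow L$ is injective, it therefore suffices to prove $\Res(f,g) = \Rec(f,g)^2$ under the extra assumption that $R = L$ is a field over which $f$ splits.

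So assume $f,g \in L[x]$ with $\deg f = 2m$, $\deg g = 2n$, and $f$ split. By Remark~\ref{rem:split} there are units $\alpha_1,\dots,\alpha_m \in L^\times$ with $f(x) = \prod_{i=1}^m (x-\alpha_i)(x-\alpha_i^{-1})$, and Lemma~\ref{lem:computing the reciprocant} then gives $f^\#(x) = \prod_{i=1}^m \bigl(x - (\alpha_i + \alpha_i^{-1})\bigr)$. Applying (RES1) to $f$, whose full list of roots with multiplicity is $\alpha_1,\alpha_1^{-1},\dots,\alpha_m,\alpha_m^{-1}$, we get
\[
\Res(f,g) = \prod_{i=1}^m g(\alpha_i)\,g(\alpha_i^{-1}).
\]

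The crux is the following identity, valid for every unit $\alpha \in L^\times$: plugging $x = \alpha$ and $x = \alpha^{-1}$ into $g(x) = x^n g^\#(x + x^{-1})$ (which is \eqref{eq:Chebyshev transform} for $g$) gives $g(\alpha) = \alpha^n g^\#(\alpha + \alpha^{-1})$ and $g(\alpha^{-1}) = \alpha^{-n} g^\#(\alpha + \alpha^{-1})$, whence
\[
g(\alpha)\,g(\alpha^{-1}) = g^\#\bigl(\alpha + \alpha^{-1}\bigr)^2.
\]
Substituting this into the product above yields $\Res(f,g) = \bigl(\prod_{i=1}^m g^\#(\alpha_i + \alpha_i^{-1})\bigr)^2$, and a second application of (RES1), now to $f^\#$ (which splits over $L$ with roots $\alpha_i + \alpha_i^{-1}$), identifies the parenthesized product with $\Res(f^\#,g^\#) = \Rec(f,g)$. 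This gives $\Res(f,g) = \Rec(f,g)^2$ over $L$, and hence over $R$.

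Once $f$ is split the computation is essentially two lines, so I do not expect a genuine obstacle. The only points requiring care are in the first paragraph: that passing to $L$ does not disturb the meaning of $f^\#$ and $g^\#$, and that we only need $f$ (not $g$) to split, which is legitimate because \eqref{eq:Chebyshev transform} for $g$ is a polynomial identity in $L[x]$ regardless of how $g$ factors over $L$.
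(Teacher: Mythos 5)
Your proposal is correct and follows essentially the same route as the paper: reduce to a splitting field $L$ of $f$ via (RES3), factor $f$ as $\prod_i (x-\alpha_i)(x-\alpha_i^{-1})$, and use \eqref{eq:Chebyshev transform} at $\alpha_i$ and $\alpha_i^{-1}$ so that the powers of $\alpha_i$ cancel and $\prod_i g(\alpha_i)g(\alpha_i^{-1})$ becomes $\bigl(\prod_i g^\#(\alpha_i+\alpha_i^{-1})\bigr)^2$. The only (cosmetic) difference is that you run the computation from $\Res(f,g)$ toward $\Rec(f,g)^2$ rather than the reverse, and you are slightly more explicit about the exponent $n = \tfrac{1}{2}\deg g$ and about the compatibility of the trace polynomial with base change.
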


\begin{proof}
Let $K$ be the fraction field of $R$ and let $L$ be a splitting field for $f$ over $K$.
By Remark~\ref{rem:split}, we can write $f(x) = \prod_{i=1}^m (x - \alpha_i)(x - \alpha_i^{-1})$ with $\alpha_i \in L$ for all $i$. 
In what follows, will apply (RES3) to the natural injective map $\phi : R \to L$.

Let $a_i = \alpha_i + \alpha_i^{-1}$ for $i=1,\ldots,m$. We have:

\begin{align*}
\Res(f^\#,g^\#)^2 &= \prod_i g^\#(a_i) \cdot  \prod_i g^\#(a_i) \qquad\text{(by (RES1), (RES3), and \eqref{eq:product formula for gsharp})} \notag   \\
&=  \prod_i \alpha_i^{-m} g(\alpha_i) \cdot  \prod_i \alpha_i^m g(\alpha_i^{-1}) \qquad\text{(by \eqref{eq:Chebyshev transform})} \notag  \\
&= \prod_i g(\alpha_i) \cdot  \prod_i g(\alpha_i^{-1})  \notag  \\
&= \Res(f,g) \qquad\text{(by (RES1)).} \notag 
\end{align*}
\end{proof}

We will use the following in our proof of Quadratic Reciprocity:

\begin{prop} \label{prop:rec mod p}
If $g_1,g_2 ,h \in \Z[x]$ are monic and reciprocal polynomials of even degree and $n$ is a positive integer such that $g_1 \equiv g_2 \pmod{n}$, then
\[
\Rec(g_1,h) \equiv \Rec(g_2,h) \pmod{n}
\]
and 
\[
\Rec(h,g_1) \equiv \Rec(h,g_2) \pmod{n}.
\]
\end{prop}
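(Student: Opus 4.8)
The plan is to reduce the statement to the elementary observation that the trace polynomial construction and the resultant are both defined by polynomial formulas with integer coefficients, hence commute with reduction modulo $n$. First I would note that by Proposition~\ref{prop:res square} we only care about $\Rec(f,g) = \Res(f^\#, g^\#)$, so it suffices to show (i) that $g_1 \equiv g_2 \pmod n$ implies $g_1^\# \equiv g_2^\# \pmod n$, and then (ii) that reduction mod $n$ commutes with taking resultants. Step (ii) is precisely an instance of (RES3) applied to the quotient homomorphism $\phi : \Z \to \Z/n\Z$, once we observe that $\phi(g_i^\#)$ and $\phi(h^\#)$ are still monic (the trace polynomial of a monic reciprocal polynomial is monic, and monic-ness is preserved under any ring homomorphism). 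The second asserted congruence, $\Rec(h,g_1)\equiv\Rec(h,g_2)$, then follows either by the same argument with the roles swapped, or immediately from the first congruence together with (RES2), which gives $\Rec(h,g_i) = \pm\,\Rec(g_i,h)$ with a sign depending only on $\deg h$ and $\deg g_1 = \deg g_2$.

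So the crux is step (i): showing that the trace polynomial depends on the coefficients of a monic reciprocal polynomial of fixed even degree $2m$ through universal polynomial formulas with integer coefficients. I would argue this by working over the polynomial ring $\Z[c_1,\ldots,c_m]$ (or $\Z[a_0,\ldots,a_{m-1}]$ with $a_m = 1$, recording the ``independent half'' of the coefficients of a monic reciprocal polynomial), forming the universal monic reciprocal polynomial $G(x)$ of degree $2m$ with those indeterminate coefficients, and invoking the defining identity \eqref{eq:Chebyshev transform}: there is a unique $G^\#(x)$ of degree $m$ with $G(x) = x^m G^\#(x + 1/x)$. The uniqueness forces the coefficients of $G^\#$ to be specific elements of $\Z[c_1,\ldots,c_m]$ — this is exactly the content deferred to Appendix B, so I would cite that the trace polynomial operation is ``polynomial with $\Z$-coefficients in the coefficients of $g$,'' or equivalently that $g \mapsto g^\#$ is induced by a ring homomorphism and therefore commutes with the coefficient-wise reduction $\Z[x] \to (\Z/n\Z)[x]$. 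Given that, $g_1 \equiv g_2 \pmod n$ immediately yields $g_1^\# \equiv g_2^\# \pmod n$ by substituting the congruent coefficients into those universal formulas.

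The main obstacle, such as it is, is making precise the claim in step (i) that $g \mapsto g^\#$ is given by integer-coefficient polynomial formulas in a way that does not secretly require working over a field or inverting anything. The honest way to see this is to expand $x^m G^\#(x + 1/x)$ for $G^\#(x) = x^m + d_{m-1} x^{m-1} + \cdots + d_0$, multiply through by $x^m$, and match coefficients with $G(x)$: one obtains a triangular linear system for $d_{m-1}, d_{m-2}, \ldots, d_0$ in terms of the coefficients of $G$, and the triangularity (the coefficient of $x^{2m-1}$ in $x^m G^\#(x+1/x)$ is $d_{m-1}$, the coefficient of $x^{2m-2}$ is $d_{m-2} + (\text{something in } d_{m-1})$, etc.) shows the system can be solved over $\Z$ with no denominators. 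This is routine bookkeeping that properly belongs in Appendix B, so in the body of the proof I would simply state that the coefficients of $g^\#$ are integer-coefficient polynomials in the coefficients of $g$ (by the construction in Appendix B), deduce (i), combine it with (RES3) for $\Z \to \Z/n\Z$ to get the first congruence, and finish the second congruence via (RES2).
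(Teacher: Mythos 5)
Your proposal is correct and follows essentially the same route as the paper: reduce the second congruence to the first via (RES2), show $g_1^\# \equiv g_2^\# \pmod{n}$, and apply (RES3) to the reduction map $\Z \to \Z/n\Z$. The paper dispatches your step (i) in one line by citing the uniqueness clause of Proposition~\ref{prop:hpoly2} over the ring $\Z/n\Z$, which amounts to the same thing as your ``universal integer-coefficient formulas'' argument.
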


A proof, based on property (RES3) of resultants, is given in Appendix B.

\section{Proof of the Law of Quadratic Reciprocity}

For $n \geq 1$, define 
\begin{equation} \label{eq:gn}
g_n(x) = \frac{x^n - 1}{x-1} = x^{n-1} + x^{n-2} + \cdots + x + 1 \in \Z[x].
\end{equation}

If $p$ is prime, then since $x^p - 1 \equiv (x-1)^p \pmod{p}$ we have 
\begin{equation} \label{eq:gpmodp}
g_p(x) \equiv (x-1)^{p-1} \pmod{p}.
\end{equation}

\begin{prop} \label{prop:Res g}
If $m,n$ are relatively prime positive integers, ${\rm Res}(g_m,g_n) = 1$.
\end{prop}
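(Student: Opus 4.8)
The plan is to compute $\Res(g_m,g_n)$ using the product formula (RES1), for which we first need to understand the roots of $g_n$. Over a splitting field, $g_n(x) = \prod_{\zeta} (x - \zeta)$ where $\zeta$ ranges over the $n$-th roots of unity other than $1$; equivalently, the roots of $g_n$ are exactly the elements $\zeta$ with $\zeta^n = 1$ and $\zeta \neq 1$. By (RES1), $\Res(g_m,g_n) = \prod_{\zeta^m = 1,\, \zeta \neq 1} g_n(\zeta)$. So I would first evaluate $g_n(\zeta)$ when $\zeta^m = 1$.

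The key computation: if $\zeta^m = 1$ then $\zeta^n - 1$ depends only on the order of $\zeta$ inside the cyclic group of $m$-th roots of unity, and since $\gcd(m,n) = 1$, the map $\zeta \mapsto \zeta^n$ permutes the $m$-th roots of unity (it is an automorphism of that cyclic group). In particular, for $\zeta$ an $m$-th root of unity, $\zeta^n = 1$ if and only if $\zeta = 1$. Hence for every root $\zeta \neq 1$ of $g_m$ we have $\zeta^n \neq 1$, so $g_n(\zeta) = \frac{\zeta^n - 1}{\zeta - 1}$ is a genuine quotient (nonzero denominator). Then
\[
\Res(g_m,g_n) = \prod_{\zeta^m=1,\,\zeta\neq 1} \frac{\zeta^n-1}{\zeta-1} = \frac{\prod_{\zeta^m=1,\,\zeta\neq 1}(\zeta^n - 1)}{\prod_{\zeta^m=1,\,\zeta\neq 1}(\zeta-1)}.
\]
Because raising to the $n$-th power permutes the set $\{\zeta : \zeta^m = 1\}$ and fixes $1$, it also permutes the set $\{\zeta : \zeta^m = 1,\ \zeta \neq 1\}$; therefore the numerator $\prod_{\zeta\neq 1}(\zeta^n - 1)$ equals the product $\prod_{\eta \neq 1}(\eta - 1)$ over the same index set, which is exactly the denominator. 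Thus the ratio is $1$.

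I anticipate the main obstacle is bookkeeping rather than conceptual: one must make sure the argument is carried out over an honest splitting field (so that (RES1) applies with all roots lying in the domain), check that $g_m$ and $g_n$ are monic (immediate from \eqref{eq:gn}), and justify the permutation claim — namely that $\zeta \mapsto \zeta^n$ is a bijection on the group $\mu_m$ of $m$-th roots of unity, which follows from $\gcd(m,n)=1$ together with the fact that $\mu_m$ is a finite abelian group of exponent dividing $m$. A clean way to package this is to pass to a splitting field $L$ of $x^m - 1$ over $\Q$ (equivalently over $\Z$, using (RES3) for the inclusion $\Z \hookrightarrow L$); inside $L^\times$ the $m$-th roots of unity form a group on which the $n$-th power map is injective hence bijective, and the product manipulation above is then purely formal. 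An alternative, essentially equivalent, route is to use (RES4): divide and take remainders among the $g$'s, reducing $\gcd(m,n)$ via a Euclidean-type descent until one reaches $\Res(g_1, g_k) = \Res(1, g_k)$-type base cases; but the direct root computation above seems shortest.
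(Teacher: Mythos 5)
Your proof is correct, but it takes a genuinely different route from the paper's. The paper stays entirely inside $\Z[x]$: it writes $x^n - 1 = (x^m-1)h(x) + (x^r-1)$ where $n = mq+r$, divides by $x-1$ to get $g_n = g_m h + g_r$, and then runs a Euclidean descent using (RES4) and (RES2) until it bottoms out at $\Res(g_k, g_1) = \Res(g_k,1) = 1$. No splitting fields or roots of unity appear. You instead pass to a splitting field $L$ of $x^m-1$ over $\Q$ via (RES3), apply (RES1) to write $\Res(g_m,g_n) = \prod_{\zeta \in \mu_m\setminus\{1\}} g_n(\zeta)$, and observe that since $\gcd(m,n)=1$ the $n$-th power map is a bijection of $\mu_m$ fixing $1$, so the numerator $\prod(\zeta^n-1)$ is a reindexing of the denominator $\prod(\zeta-1)$ and the ratio is $1$. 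Your permutation argument is sound (injectivity of $\zeta \mapsto \zeta^n$ on $\mu_m$ follows from B\'ezout, and you correctly note that $\zeta \neq 1$ forces $\zeta^n \neq 1$ so no denominator vanishes), and it handles the degenerate case $m=1$ as an empty product. What each approach buys: yours is a one-step closed-form computation and makes transparent \emph{why} the answer is $1$ (the roots get permuted), but it requires the structure theory of roots of unity in characteristic zero and a splitting field; the paper's argument is deliberately more elementary --- the author explicitly advertises it as such --- using only formal resultant identities and polynomial division over $\Z$, at the cost of an inductive descent rather than a direct evaluation. Your closing aside correctly identifies the paper's route as the (RES4)-based alternative.
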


\begin{proof}
If $m=n=1$ then $\Res(g_m,g_n) = \Res(1,1) = 1$.
We may therefore suppose without loss of generality that $n > m$.
Note that since ${\rm gcd}(m,n)=1$, at least one of $m$ and $n$ is odd.

By the division algorithm, we can write $n = mq + r$ with $q,r$ integers such that $q \geq 0$ and $0 \leq r < q$.  
Since at least one of $m$ and $n$ is odd, the same is true for $m$ and $r$.

Working in the quotient ring $\Z[x]/(x^m - 1)$, we have

\begin{align*}
 x^n - 1
  &\equiv (x^m)^q \cdot x^r - 1 \notag\\
  &\equiv  1^q \cdot x^r - 1 \notag\\
  &\equiv  x^r - 1 \pmod{x^m - 1}. \notag\\
\end{align*}

In other words, there is a polynomial $h(x) \in \Z[x]$ such that 
\[
x^n - 1 = (x^m - 1)h(x) + x^r - 1.
\]

Dividing both sides by $x-1$ gives
\[
g_n(x) = g_m(x) h(x) + g_r(x).
\]

By (RES4) and (RES2), we have 
\begin{equation} \label{eq:euclidean}
\Res(g_m,g_n) = \Res(g_m,g_r) = \Res(g_r,g_m).
\end{equation}

Since ${\rm gcd}(m,n)=1$, it follows from \eqref{eq:euclidean} and the Euclidean algorithm that there is an integer $k\geq 1$ such that 
\[
\Res(g_m,g_n) = \Res(g_k,g_1) = \Res(g_k,1) = 1.
\]
\end{proof}

\begin{remark}
Conversely, if ${\rm gcd}(m,n) = d > 1$ then (RES1) and (RES3) (applied to the natural injection $\phi : \Z \into \C$) imply that $\Res(g_m,g_n)=0$, since a primitive $d^{\rm th}$ root of unity in $\C$ is a common root of $g_m$ and $g_n$.
\end{remark}

\begin{remark}
Here is an alternate proof of Proposition~\ref{prop:Res g} which is arguably more conceptual, but somewhat less elementary.
First, observe that if $K$ is any field and $\alpha \in K$ satisfies both $\alpha^m = 1$ and $\alpha^n = 1$, with ${\rm gcd}(m,n)=1$, then necessarily $\alpha = 1$.
Let $p$ be a prime number, let $\F_p$ be the finite field of order $p$, and let $\phi : \Z \to \F_p$ be the natural homomorphism.
Applying (RES3) to $\phi$ implies, together with (RES1) and the above observation with $K=\F_p$, that $\Res(g_m,g_n) \not\equiv 0 \pmod{p}$.
Since this holds for all prime numbers $p$, we must have $\Res(g_m,g_n) = \pm 1$.
By Proposition~\ref{prop:res square}, we must in fact have $\Res(g_m,g_n) = 1$.
\end{remark}

Assume from now on that $n$ is odd.
Since $g_n$ is a reciprocal polynomial of even degree, it follows from
\eqref{eq:Chebyshev transform} that
\begin{equation} \label{eq:gnsharp2}
g_n^\#(2) = g_n(1) = n.
\end{equation}

Furthermore, for any ring $R$, if $g(x)=(x-1)^{2m} \in R[x]$ then, by Lemma~\ref{lem:computing the reciprocant},
\begin{equation} \label{eq:special sharp}
g^\#(x) = (x-2)^m.
\end{equation}

\begin{remark} \label{rmk:recursion for g}
By Remark~\ref{rem:gnrecursion}, we have $g_1^\#(x) = 1$ and $g_3^\#(x) = x+1$, and
\begin{equation} \label{eq:gnrecursion} 
g_n^\#(x) = x g_{n-2}^\#(x) - g_{n-4}^\#(x)
\end{equation}
for all odd integers $n \geq 5$.
This implies that the polynomials $g_n^\#$ are related to the 
classical {\em Lucas polynomials} $L_n(x)$, defined for $n \geq 0$ by $L_0(x) = 2, L_1(x) = x$, and $L_n(x) = xL_{n-1}(x) + L_{n-2}(x)$, as follows. 
For $n\geq 1$ odd, define $H_n(x)$ by $L_n(x) = xH_n(x^2)$. Then $g_n^\#(x) = H_n(x-2)$.
\end{remark}

\begin{proof}[Proof of the Law of Quadratic Reciprocity]
Let $p,q$ be distinct odd primes. 

Since $\Res(g_p,g_q) = 1$ by Proposition~\ref{prop:Res g}, it follows from Proposition~\ref{prop:res square} that $\Rec(g_p,g_q) \in \{ \pm 1 \}$.
We compute the following congruences modulo $p$:

{\allowdisplaybreaks
\begin{align*}
  \Rec(g_p,g_q) 
  &\equiv \Rec((x-1)^{p-1},g_q)  \qquad\text{(by \eqref{eq:gpmodp} and Proposition~\ref{prop:rec mod p})} \notag\\
    &\equiv \Res((x-2)^{\frac{p-1}{2}},g^\#_q)  \qquad\text{(by \eqref{eq:special sharp} and the definition of the reciprocant)} \notag\\
  &= g_q^\#(2)^{\frac{p-1}{2}}   \qquad\text{(by (RES1))} \notag\\  
  &= q^{\frac{p-1}{2}}   \qquad\text{(by \eqref{eq:gnsharp2})} \notag\\
  &\equiv \left( \frac{q}{p} \right) 
  \qquad\text{(by Euler's criterion).} \notag
\end{align*}
}

Since $\Rec(g_p,g_q)$ and $\left( \frac{q}{p} \right)$ both belong to $\{ \pm 1 \}$, it follows that 
\begin{equation} \label{eq:legendre1}
\Rec(g_p,g_q) =  \left( \frac{q}{p} \right).
\end{equation}

By symmetry, we also have 
\begin{equation} \label{eq:legendre2}
\Rec(g_q,g_p) =  \left( \frac{p}{q} \right).
\end{equation}

The Law of Quadratic Reciprocity now follows from \eqref{eq:legendre1}, \eqref{eq:legendre2}, and (RES2).
\end{proof}

\section{The supplementary law}

We can use a similar argument to prove the supplementary law characterizing $\left( \frac{2}{p} \right)$ when $p$ is an odd prime.
Actually, it turns out to be more straightforward to establish a formula for $\left( \frac{-2}{p} \right)$.

By Euler's criterion, we have
\[
\left( \frac{-2}{p} \right) = \left( \frac{-1}{p} \right) \cdot \left( \frac{2}{p} \right) = (-1)^{\frac{p-1}{2}} \left( \frac{2}{p} \right),
\]
so the supplemental law is equivalent to:

\begin{theorem} \label{thm:QRsupplement}
If $p$ is an odd prime then $\left( \frac{-2}{p} \right) = 1$ if $p \equiv 1$ or $3 \pmod{8}$ and $\left( \frac{-2}{p} \right) = -1$ if $p \equiv 5$ or $7 \pmod{8}$.
\end{theorem}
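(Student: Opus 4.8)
The plan is to imitate the proof of part~(a) of Theorem~\ref{thm:QR}, replacing $g_p$ by
\[
\tilde{g}_p(x) := \frac{x^p+1}{x+1} = g_p(-x) \in \Z[x]
\]
and $g_q$ by the fixed polynomial $h(x) = x^2+1$. Since $p$ is odd, $\tilde{g}_p$ is a monic reciprocal polynomial of even degree $p-1$, and it follows from \eqref{eq:gpmodp} (applied with $-x$ in place of $x$) that $\tilde{g}_p(x) \equiv (x+1)^{p-1} \pmod p$; moreover $h$ is reciprocal of even degree $2$, with trace polynomial $h^\#(x) = x$. The point of these choices is that in the proof of quadratic reciprocity the value $g_q^\#(2) = q$ entered through the point $2 = 1 + 1^{-1}$, whereas here we want to produce $-2$ through the point $-2 = (-1) + (-1)^{-1}$: this is why $\tilde{g}_p$ should reduce to a power of $x+1$ (rather than $x-1$) modulo $p$, and why $h$ is chosen so that $h^\#(-2) = -2$. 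Finally, exactly as in \eqref{eq:special sharp} but with $-1$ in place of $1$, Lemma~\ref{lem:computing the reciprocant} gives $\bigl((x+1)^{2m}\bigr)^\# = (x+2)^m$.

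Next I would compute $\Rec(\tilde{g}_p, h)$ in two ways. Reducing modulo $p$ with Proposition~\ref{prop:rec mod p} and the facts above,
\[
\Rec(\tilde{g}_p, h) \equiv \Rec\bigl((x+1)^{p-1}, h\bigr) = \Res\bigl((x+2)^{\frac{p-1}{2}},\, x\bigr) = (-2)^{\frac{p-1}{2}} \equiv \left( \frac{-2}{p} \right) \pmod p,
\]
using (RES1) for the middle equality and Euler's criterion at the end. Computing instead directly from the definition of the reciprocant, together with (RES1) and (RES2),
\[
\Rec(\tilde{g}_p, h) = \Res(\tilde{g}_p^\#, x) = (-1)^{\frac{p-1}{2}}\, \tilde{g}_p^\#(0).
\]
To evaluate the integer $\tilde{g}_p^\#(0)$ I would substitute $x = \sqrt{-1}$ into the identity \eqref{eq:Chebyshev transform}: since $i + i^{-1} = 0$, this gives
\[
\tilde{g}_p^\#(0) = i^{-\frac{p-1}{2}}\, \tilde{g}_p(i) = i^{-\frac{p-1}{2}} \cdot \frac{i^p + 1}{i+1}.
\]
A short case analysis on $p \bmod 8$, which pins down both $i^{-(p-1)/2}$ and the value $\tilde{g}_p(i) \in \{1, -i\}$, then shows that $\Rec(\tilde{g}_p, h) = 1$ when $p \equiv 1, 3 \pmod 8$ and $\Rec(\tilde{g}_p, h) = -1$ when $p \equiv 5, 7 \pmod 8$; in particular $\Rec(\tilde{g}_p, h) \in \{\pm 1\}$.

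Comparing the two computations concludes the proof: $\left( \frac{-2}{p} \right)$ and $\Rec(\tilde{g}_p, h)$ both lie in $\{\pm 1\}$ and are congruent modulo the odd prime $p$, hence are equal, and the second computation has already identified their common value in terms of $p \bmod 8$. The only real labor is the substitution $x = \sqrt{-1}$ together with the sign bookkeeping over the four residue classes modulo $8$; the one step that needs foresight is the choice of the auxiliary polynomial $x^2+1$ (paired with $\tilde{g}_p$ rather than $g_p$), engineered so that its trace polynomial takes the value $-2$ at the point produced by the mod-$p$ reduction.
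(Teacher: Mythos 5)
Your proof is correct and follows essentially the same route as the paper's: the paper computes $\Rec(\Phi_4,g_p)$, on one side reducing $g_p \equiv (x-1)^{p-1} \pmod p$ to get $(-2)^{(p-1)/2} \equiv \left(\frac{-2}{p}\right)$, and on the other evaluating $g_p^\#(0) = g_p(i)\,i^{-(p-1)/2}$ exactly to read off the answer mod $8$. Your replacement of $g_p(x)$ by $g_p(-x)$ and the swap of the two arguments (with the compensating sign from (RES2)) are cosmetic, and your case analysis modulo $8$ checks out.
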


Instead of the polynomial $g_2(x) = x+1$, we will use the cyclotomic polynomial $\Phi_4(x) = x^2 + 1$. Note that $\Phi_4$ is a reciprocal polynomial of even degree, with $\Phi_4^\#(x) = x$.

\begin{prop} \label{prop:Res 2}
If $n$ is an odd positive integer, then ${\rm Rec}(\Phi_4,g_n)$ is equal to 1 if $n$ is 1 or 3 (mod 8) and $-1$ if $n$ is 5 or 7 (mod 8).
\end{prop}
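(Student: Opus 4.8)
The key point is that the trace polynomial of $\Phi_4$ is especially simple: $\Phi_4^\#(x) = x$. Hence, by the definition of the reciprocant together with (RES1),
\[
\Rec(\Phi_4, g_n) = \Res(\Phi_4^\#, g_n^\#) = \Res(x, g_n^\#) = g_n^\#(0),
\]
so the proposition reduces to evaluating the integer $g_n^\#(0)$ for odd $n$ and checking that it equals $+1$ when $n \equiv 1,3 \pmod 8$ and $-1$ when $n \equiv 5,7 \pmod 8$.

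To compute $g_n^\#(0)$ I would substitute $x = i$ into the defining identity \eqref{eq:Chebyshev transform}. That identity is valid in $\Z[x,x^{-1}]$, hence remains valid after applying the ring homomorphism $\Z[x,x^{-1}] \to \C$ sending $x \mapsto i$; since $g_n$ has degree $n-1 = 2\cdot\frac{n-1}{2}$ and $i + i^{-1} = i - i = 0$, this gives
\[
g_n(i) = i^{(n-1)/2}\, g_n^\#(0), \qquad\text{so}\qquad g_n^\#(0) = i^{-(n-1)/2}\,\frac{i^n - 1}{i - 1}.
\]
Because $g_n^\# \in \Z[x]$, the right-hand side is automatically an integer; that it in fact lies in $\{\pm 1\}$ will fall out of the computation below, and can also be seen in advance from $|g_n(i)|^2 = |i^n-1|^2/|i-1|^2 = 2/2 = 1$ for odd $n$.

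It then remains to carry out the elementary computation in the four residue classes of $n$ modulo $8$. Writing $g_n(i) = 1 + i + \dots + i^{n-1}$ and grouping the terms into consecutive blocks of four (each summing to $0$), one finds $g_n(i) = 1$ when $n \equiv 1 \pmod 4$ and $g_n(i) = i$ when $n \equiv 3 \pmod 4$; combining this with the evaluation of $i^{-(n-1)/2}$ according to $n \bmod 8$ yields $g_n^\#(0) = (-1)^{\lfloor (n-1)/4\rfloor}$, which is $+1$ exactly for $n \equiv 1,3 \pmod 8$ and $-1$ exactly for $n \equiv 5,7 \pmod 8$. I do not expect a real obstacle; the only point requiring care is the legitimacy of the substitution $x = i$, handled above, and everything else is bookkeeping.

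Finally, I would note that one can avoid $\C$ entirely, which is perhaps more in keeping with the elementary spirit of the paper: evaluating the recursion \eqref{eq:gnrecursion} at $x = 0$ gives $g_n^\#(0) = -\,g_{n-4}^\#(0)$ for odd $n \ge 5$, and together with the base values $g_1^\#(0) = 1$ and $g_3^\#(0) = 1$ from Remark~\ref{rmk:recursion for g}, a one-line induction shows that $n \mapsto g_n^\#(0)$ is periodic with period $8$ and takes exactly the claimed values. I would likely present this recursion argument as the main proof and relegate the $x = i$ computation to a remark.
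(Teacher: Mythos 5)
Your proposal is correct and follows the paper's proof essentially verbatim: the same reduction $\Rec(\Phi_4,g_n)=\Res(x,g_n^\#)=g_n^\#(0)$, the same evaluation $g_n^\#(0)=i^{-(n-1)/2}(i^n-1)/(i-1)$, and the same alternative via the recursion $g_n^\#(0)=-g_{n-4}^\#(0)$ with base values $g_1^\#(0)=g_3^\#(0)=1$. The only difference is that you actually carry out the case-by-case computation over the four residue classes mod $8$ (correctly), which the paper dismisses as ``straightforward but tedious.''
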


\begin{proof}
We have 
\[
\Rec(\Phi_4,g_n) = \Res(x, g_n^\#) = g_n^\#(0),
\]
so it suffices to evaluate $g_n^\#(0)$.

This is a straightforward but tedious calculation given \eqref{eq:Chebyshev transform}, which implies that 
\[
g_n^\#(0) = g_n(i) i^{-\frac{n-1}{2}} = \frac{i^n - 1}{i-1} i^{-\frac{n-1}{2}},
\]
where $i^2 = -1 \in \C$.

Alternatively, recall from Remark~\ref{rmk:recursion for g} that for $n \geq 5$ odd we have
\begin{equation*}
g_n^\#(x) = x g_{n-2}^\#(x) - g_{n-4}^\#(x).
\end{equation*}

From this, a simple inductive argument shows that  
$g_n^\#(0) = 1$ if $n$ is 1 or 3 (mod 8) and $-1$ if $n$ is 5 or 7 (mod 8).
\end{proof}

\begin{proof}[Proof of Theorem~\ref{thm:QRsupplement}]
Let $p$ be an odd prime. We compute:
{\allowdisplaybreaks
\begin{align*}
  \Rec(\Phi_4,g_p) 
  &\equiv \Rec(\Phi_4,(x-1)^{p-1})  \qquad\text{(by \eqref{eq:gpmodp} and Proposition~\ref{prop:rec mod p})} \notag\\
    &\equiv \Res(x,(x-2)^{\frac{p-1}{2}})  \qquad\text{(by \eqref{eq:special sharp} and the definition of the reciprocant)} \notag\\
  &= (-2)^{\frac{p-1}{2}}   \qquad\text{(by (RES1))} \notag\\  
  &\equiv \left( \frac{-2}{p} \right) \pmod{p}  \qquad\text{(by Euler's criterion).} \notag
\end{align*}
}

Since $\Rec(\Phi_4,g_p)$ and $\left( \frac{-2}{p} \right)$ both belong to $\{ \pm 1 \}$, we have
$\Rec(\Phi_4,g_p) =  \left( \frac{-2}{p} \right)$,
which implies the desired result via Proposition~\ref{prop:Res 2}.
\end{proof}

\section{Related work} \label{sec:RelatedWork}

The proof of Quadratic Reciprocity given here is closely related to several existing arguments. The earliest reference we're aware of for a proof of quadratic reciprocity based on resultants of cyclotomic polynomials is J.-Y.~M\'{e}rindol's paper \cite{Merindol}, which was published in an obscure French higher education journal called L'Ouvert. A similar proof appears to have been independently discovered by Hambleton and Scharaschkin in \cite{Hambleton-Scharaschkin}. We learned of the basic argument behind these papers from Antoine Chambert-Loir's blog post \cite{Chambert-Loir}.

The main new ingredient in the present paper is a systematic use of Proposition~\ref{prop:res square} and the quantity we've dubbed the reciprocant.
As far as we know, our arguments proving the supplemental law (Theorem~\ref{thm:QRsupplement}) are also new. 

Our treatment of resultants was inspired by a paper of Barnett \cite{Barnett}.
Our proof of Proposition~\ref{prop:Res g} makes use of the Euclidean algorithm and property (RES4) of resultants; 
this approach is also used, for example, in \cite{FHR}.

Although we have not seen Proposition~\ref{prop:res square} explicitly stated in a published paper, it is mentioned without proof in a Math Overflow post by Denis Serre \cite{Serre}.
The main ingredients in the proof of Proposition~\ref{prop:res square} are also contained in the proof of \cite[Theorem 3.4]{Loper-Werner}.

The first published work we're aware of that computes the resultant of two cyclotomic polynomials is F.~E.~Diederichsen's paper \cite{Diederichsen}. Diederichsen's results were extended, and his proofs simplified, in Apostol's paper \cite{Apostol}.
Some other papers computing resultants of Fibonacci--Lucas type polynomials include \cite{Merindol,Hambleton-Scharaschkin,Loper-Werner,FHR}.

As noted in \cite[Section 3]{Hambleton-Scharaschkin}, the key step underlying our proof of Quadratic Reciprocity, which is identifying the Legendre symbol with a resultant, is closely related to one of Eisenstein's classical proofs \cite[Chapter 8.1]{LemmermeyerBook}.
There are also close connections to the more recent proof of Swan \cite{Swan}.

The proof given in this paper is also closely related to the proof in the author's blog post \cite{BakerBlog}. 

A resultant-based approach to quadratic reciprocity in the function field case is given in \cite{Clark-Pollack}. See Section 3.4 of {\em loc.~cit.} for remarks about other proofs of the Law of Quadratic Reciprocity which ultimately boil down (either explicitly or in disguise) to property (RES2) of resultants.

\appendix
\section{Resultants} \label{AppendixA}

Let $R$ be a ring and let $f,g \in R[x]$ be monic polynomials.
Inspired by an observation of Barnett \cite{Barnett}, we define the {\em resultant} of $f$ and $g$ to be
\[
\Res(f,g) := {\rm det} \left( g(C_f) \right) \in R,
\]
where $C_f$ is the {\em companion matrix} of $f(x) = a_0 + a_1 x + \cdots + a_{m-1}x^{m-1} + x^m$:
\[
C_f :=
   \begin{pmatrix}
   0      &  0     & 0      & \cdots & 0 & -a_0  \\
   1      & 0      & 0      & \cdots & 0 & -a_1 \\
   0      & 1     & 0      & \cdots & 0 & -a_2 \\
   \vdots & \vdots & \vdots & \ddots & \vdots & \vdots \\
  0    & 0   &   0  & \cdots & 1 & -a_{m-1}  \\
  \end{pmatrix}
\]

We assume for the rest of this section that $R$ is an integral domain with fraction field $K$.
We will use the following two well-known facts from linear algebra:

\begin{itemize}
\item[(LA1)] The characteristic polynomial of $C_f$ over $K$ is $f$ (cf.~\cite[Lemma 8.4]{Liesen-Mehrmann}).
\item[(LA2)] If an $m \times m$ matrix $A$ over $K$ has characteristic polynomial $f$, and if $f$ factors over some extension field $L$ of $K$ as $f(x) = (x-\lambda_1)\cdots (x-\lambda_m)$, 
then the characteristic polynomial of $g(A)$ is $(x - g(\lambda_1))\cdots (x - g(\lambda_m))$. ({\bf Proof:} By \cite[Theorem 14.17]{Liesen-Mehrmann}, $A$ is similar to an upper triangular matrix $B$ with $\lambda_1,\ldots,\lambda_m$ on the diagonal. 
The diagonal entries of $g(B)$ are $g(\lambda_1),\ldots,g(\lambda_m)$, and by \cite[Theorem 8.12]{Liesen-Mehrmann} the characteristic polynomial of $p(A)$ is equal to that of $p(B)$.)
\end{itemize}

Assume $f$ splits into linear factors over $R$ as $f(x) = (x-\alpha_1)\cdots (x-\alpha_m)$.
Then by (LA1) and (LA2), the characteristic polynomial of $g(C_f)$ over $K$ is 
\[
(x - g(\alpha_1))\cdots (x - g(\alpha_m)).
\]

It follows that the determinant of $g(C_f)$ is $\prod_{i=1}^m g(\alpha_i)$, which proves (RES1). 
Moreover, if $g(x) = (x-\beta_1)\cdots (x-\beta_n)$ with all $\beta_j \in R$ then
\begin{equation} \label{eq:symmetric product for res}
\Res(f,g) = \prod_{i,j} (\alpha_i - \beta_j) \in R.
\end{equation}

If we view the coefficients of $f$ and $g$ as indeterminates, the expression ${\rm det} \left( g(C_f) \right)$ is a polynomial of degree $m+n$ with integer coefficients in these variables.
In other words, there is a multivariate polynomial $S_{m,n} \in \Z[x_0,x_1,\ldots,x_{m-1},y_0,y_1,\ldots,y_{n-1}]$ such that for every ring $R$ and every pair of monic polynomials 
$f(x) = a_0 + a_1 x + \cdots + a_{m-1} x^{m-1} + x^m$ and $g(x) = b_0 + b_1 x + \cdots + b_{n-1} x^{n-1} + x^n$ in $R[x]$, 
\[
{\rm Res}(f,g) = S_{m,n}(a_0,a_1,\ldots,a_{m-1},b_0,b_1,\ldots,b_{n-1}).
\]

The `functoriality' relation (RES3) follows easily from this observation.

By (RES3) and the fact that $R$ is an integral domain, we may replace $R$ by a splitting field $L$ for $fg$ over the fraction field $K$ of $R$.
The identity (RES2) then follows immediately from \eqref{eq:symmetric product for res}.

In the same way, we can reduce the proof of (RES4) to the case where $f(x) = (x-\alpha_1)\cdots (x-\alpha_m)$ with all $\alpha_i \in R$.
Using (RES1), we compute:

{\allowdisplaybreaks
\begin{align*}
{\rm Res}(f(x),r(x)) &=  \prod_{i=1}^m r(\alpha_i) \\
&= \prod_{i=1}^m \left( g(\alpha_i) - f(\alpha_i)q(\alpha_i) \right) \\
&= \prod_{i=1}^m g(\alpha_i) \\
&=  {\rm Res}(f(x),g(x)),
\end{align*}
}
which proves (RES4).

\section{The trace polynomial}

Our primary goal in this Appendix is to prove: 

\begin{prop}
\label{prop:hpoly2}
Suppose $R$ is a ring and $g \in R[x]$ is a reciprocal polynomial of even degree $2m$. 
Then there is a unique polynomial $h(x) \in R[x]$ of degree $m$ such that $g(x) = x^m h(x + \frac{1}{x})$.
\end{prop}

The following proof was suggested by Darij Grinberg.

\begin{proof} 
We first prove the existence of $h(x)$. This will be done by induction on $m$. The base case $m = 0$ is clear. For the induction step, let $g(x) = a_0 + a_1 x + ... + a_{2m} x^{2m}$ be a reciprocal polynomial of degree $2m$; in particular, $a_{2m} = a_0$. 
Thus $\tilde{g}(x) := \left( g(x) - a_0 (1 + x^2)^m \right) / x$ is a reciprocal polynomial of degree $2(m-1)$. By the inductive hypothesis, $\tilde{g}(x) = x^{m-1} \tilde{h}(x + 1/x)$ for some polynomial $\tilde{h}(x)$ of degree $m-1$ . 
Setting $h(x) = a_0 x^m + \tilde{h}(x)$ yields $g(x) = x^m h(x + 1/x)$, as desired. This establishes the existence of $h$.

The uniqueness of $h$ follows by reversing the existence argument. More formally, we again proceed by induction on $m$. 
The base case $m=0$ is obvious. For the induction step, note that the equation $g(x) = x^m h(x + 1/x)$ implies that the $x^m$-coefficient of $h(x)$ must be $a_0$. 
Let $\tilde{h}(x) = h(x) - a_0 x^m$, which has degree $m-1$, and let $\tilde{g}(x) = \left( g(x) -  a_0 (1 + x^2)^m \right) / x$, which is reciprocal of degree $2(m-1)$. Then $\tilde{g}(x) = x^{m-1} \tilde{h}(x + 1/x)$, and by the inductive hypothesis 
$\tilde{h}(x)$ is uniquely determined by $\tilde{g}(x)$. It follows that $h$ is uniquely determined by $g$.

\end{proof}

Following the terminology of \cite[\S{2.1}]{Gross-McMullen}, we define the {\em trace polynomial} $g^\#$ of $g$ to be the polynomial $h$ appearing in Proposition~\ref{prop:hpoly2}.

\begin{remark} \label{rem:Lemmermeyer}
The following alternative proof of the existence portion of Proposition~\ref{prop:hpoly2} was
suggested by Franz Lemmermeyer, and provides an explicit recursion which will be useful in the next remark.

Write $g(x) = a_0 + a_1 x + \cdots + a_{2m} x^{2m}$ with $a_i = a_{2m-i}$ for all $0 \leq i \leq m$ and $h(x) = b_0 + b_1 x + \cdots + b_m x^m$. 
We wish to prove that we can uniquely solve for the coefficients of $h$ in terms of the coefficients of $g$.

In the Laurent polynomial ring $R[x, \frac{1}{x}]$, we have the identity
\[
x^{-m}g(x) = a_0(x^m + x^{-m}) + a_1(x^{m-1} + x^{-(m-1)}) + \cdots + a_{m-1} (x + x^{-1}) + a_m,
\]
so it suffices to prove the result for the special Laurent polynomials $f_n(x) := x^n + x^{-n}$ for all $n \geq 0$.
In other words, we want to prove that for each $n \geq 0$, there is a polynomial $h_n(x) \in R[x]$ of degree $n$ such that $f_n(x) = h_n(x + x^{-1})$.

We prove existence of the polynomials $h_n(x)$ by induction on $n$. 
The result is trivial for $n=0,1$, so we may assume that $n\geq 2$ and that the result is true for polynomials of degree at most $n-1$.
A simple calculation gives
\[
f_n(x) = (x + x^{-1}) f_{n-1}(x) - f_{n-2}.
\]

Therefore, if we set $h_0(x) = 2$, $h_1(x) = x$, and 
\begin{equation} \label{eq:hnrecursion}
h_n(x) = xh_{n-1}(x) - h_{n-2}(x),
\end{equation}
we will have the desired identity $f_n(x) = h_n(x + x^{-1})$.
\end{remark}

\begin{remark} \label{rem:gnrecursion}
For $n\geq 0$, define $g_{2n+1}(x) = \sum_{k=0}^{2n} x^k$ as in \eqref{eq:gn}.

Then with $f_k(x)$ and $h_k(x)$ as in Remark~\ref{rem:Lemmermeyer}, for $n\geq 1$ we have $x^{-n} g_{2n+1} = 1 + \sum_{k=1}^n f_k(x)$, and thus
$g^\#_{2n+1}(x) = 1 + \sum_{k=1}^n h_k(x)$.

Since $g_1(x) = 1$ and $g_3(x)=1+x+x^2$, we have $g_1^\#(x) = 1$ and $g_3^\#(x) = x+1$.
Moreover, since $h_k(x) = x h_{k-1}(x) - h_{k-2}(x)$ for $k \geq 2$, it follows from \eqref{eq:hnrecursion} that for $n \geq 2$,
\[
x g^\#_{2n-1}(x) - g^\#_{2n-3}(x) = x + \sum_{k=1}^{n-1} \left( xh_k(x) - h_{k-1}(x) \right) - 1 + h_0 = 1 + x + \sum_{k=2}^n h_k(x) = g^\#_{2n+1}.
\]

In other words, for all odd integers $n \geq 5$ we have
\begin{equation}
g_n^\#(x) = x g_{n-2}^\#(x) - g_{n-4}^\#(x).
\end{equation}
\end{remark}

\begin{proof}[Proof of Lemma~\ref{lem:computing the reciprocant}]
To see that $g(x) = \prod_{i=1}^m (x - \alpha_i)(x - \alpha_i^{-1})$ is reciprocal, we compute: 
\begin{align*}
x^{2m}g(\frac{1}{x}) & = x^{2m} \prod (\frac{1}{x} - \alpha_i)(\frac{1}{x} - \frac{1}{\alpha_i}) \\
&= \prod (1 - \alpha_i x)(1 - \frac{1}{\alpha_i} x) \\
&= (-1)^m \prod \alpha_i (x  -  \frac{1}{\alpha_i}) \cdot (-1)^m  \prod \frac{1}{\alpha_i} (x - \alpha_i) \\
&= \prod (x  -  \frac{1}{\alpha_i})(x - \alpha_i) \\
&= g(x).
\end{align*}

To prove \eqref{eq:product formula for gsharp}, the case $m=1$ can be handled by a simple computation: setting $\alpha=\alpha_1$ and $a = \alpha + \alpha^{-1}$, we have $g(x) = (x - \alpha)(x - \alpha^{-1}) = x^2 - ax + 1 = x(x + \frac{1}{x} - a)$, and thus
$g^\#(x) = x - a$. The general case follows immediately from the special case $m=1$: if $a_j = \alpha_j + \alpha_j^{-1}$ then $g^\#(x) = \prod_{j=1}^m  (x - a_j)$. 
\end{proof}

As mentioned in the text, we define the {\em reciprocant} $\Rec(f,g)$ of two reciprocal polynomials of even degree to be $\Res(f^\#,g^\#)$.

\begin{proof}[Proof of Proposition~\ref{prop:rec mod p}]
It suffices, by (RES2), to prove the following statement: if $g_1,g_2 ,h \in \Z[x]$ are reciprocal of even degree and $n$ is a positive integer such that $g_1 \equiv g_2 \pmod{n}$, then $\Rec(g_1,h) \equiv \Rec(g_2,h) \pmod{n}$.

By Proposition~\ref{prop:hpoly2}, we have $g_1^\#(x) \equiv g_2^\#(x) \pmod{n}$. Applying (RES3) to the natural ring homomorphism $\phi : \Z \to \Z/n\Z$ shows that $\Res(g^\#_1,h^\#) \equiv \Res(g^\#_2,h^\#) \pmod{n}$ as desired.
\end{proof}

\begin{small}
 \bibliographystyle{plain}
 \bibliography{QR}
\end{small}

\end{document}